\newtheorem{thm}{Theorem}[section]
\newtheorem{cor}[thm]{Corollary}
\newtheorem{lem}[thm]{Lemma}
\newtheorem{prop}[thm]{Proposition}
\newtheorem{rem}[thm]{Remark}
\theoremstyle{plain}
\theoremstyle{definition}
\newtheorem{defn}[thm]{Definition}
\theoremstyle{remark}
\newcommand\blfootnote[1]{%
	\begingroup
	\renewcommand\thefootnote{}\footnote{#1}%
	\addtocounter{footnote}{-1}%
	\endgroup
}
\newcommand{\N}{\mathbb{N}}
\newcommand{\X}{\mathbb{X}}
\def\ba{\begin{eqnarray*}}
\def\ea{\end{eqnarray*}}
\def\bee{\begin{equation}}
\def\ene{\end{equation}}
\newcommand{\sgn}{\mathrm{sgn \,}}
\newcommand{\supp}{\mathrm{supp \,}}
\title{Equivalence between almost-greedy and semi-greedy bases}
\date{}
\begin{document}
			\author{P. M. Bern\'a}
		\address{Pablo M. Bern\'a
			\\
			Departmento de Matem\'aticas
			\\
			Universidad Aut\'onoma de Madrid
			\\
			28049 Madrid, Spain} \email{pablo.berna@uam.es}
		\maketitle

		\begin{abstract} In \cite{DKK} it was proved that almost-greedy and semi-greedy bases are equivalent in the context of Banach spaces with finite cotype. In this paper we show this equivalence for general Banach spaces.\end{abstract}
\blfootnote{\hspace{-0.031\textwidth} 2000 Mathematics Subject Classification. 46B15, 41A65.\newline
		\textit{Key words and phrases}: thresholding greedy algorithm, almost-greedy bases, semi-greedy bases.\newline
		The author was supported by a PhD fellowship FPI-UAM and the grants MTM-2016-76566-P (MINECO, Spain) and 19368/PI/14 (\emph{Fundaci\'on S\'eneca}, Regi\'on de Murcia, Spain).  }
		
\begin{section}{INTRODUCTION}
	Let $(\X,\Vert \cdot \Vert)$ be a Banach space over $\mathbb F$ ($\mathbb F$ denotes the real field $\mathbb R$ or the complex field $\mathbb C$) and let $\mathcal{B}=(e_n)_{n=1}^{\infty}$ be a semi-normalized Schauder basis of $\X$ with constant $K_b$ and with biorthogonal functionals $(e_n^{*})_{n=1}^{\infty}$, i.e, $0<\inf_n\Vert e_n\Vert \leq \sup_n\Vert e_n\Vert<\infty$ and $K_b=\sup_N \Vert S_N(x)\Vert/\Vert x\Vert<\infty$ $\forall x\in\mathbb X$, where $S_N(x) = \sum_{j=1}^N e_j^*(x)e_j$ denotes the algorithm of the partial sums.
	
	As usual  $\supp(x)=\{n\in \N: e_n^*(x)\ne 0\}$, given a finite set $A\subset \mathbb N$, $|A|$ denotes the cardinality of the set $A$, $P_A$ is the projection operator, that is, $P_A(\sum_j a_j e_j)=\sum_{j\in A} a_j e_j$, $P_{A^c}=\text{I}_{\mathbb X}-P_A$, $\mathbf{1}_{\varepsilon A}=\sum_{n\in A} \varepsilon_ne_n$ with $\vert \varepsilon_n\vert = 1$ (where $\varepsilon_n$ could be real or complex), $\mathbf{1}_A=\sum_{n\in A}e_n$ and for $A,B\subset\mathbb N$, we write $A<B$ if $\max_{i\in A} i < \min_{j\in B} j$.
	
	In 1999, S. V. Konyagin and V. N. Temlyakov introduced the \textit{Thresholding Greedy Algorithm} (TGA) (see \cite{KT}): given $x = \sum_{i=1}^{\infty}e_i^{*}(x)e_i \in \X$, we define the \textit{natural greedy ordering} for $x$ as the map $\rho: \mathbb{N}\longrightarrow\mathbb{N}$ such that $\supp(x) \subset \rho(\mathbb{N})$ and so that if $j<k$ then either $\vert e_{\rho(j)}^*(x)\vert > \vert e_{\rho(k)}^*(x)\vert$ or $\vert e_{\rho(j)}^*(x)\vert = \vert e_{\rho(k)}^*(x)\vert$ and $\rho(j)<\rho(k)$. The $m$-\textit{th greedy sum} of $x$ is $$\mathcal{G}_m(x) = \sum_{j=1}^m e_{\rho(j)}^*(x)e_{\rho(j)},$$
	and the sequence of maps $(\mathcal{G}_m)_{m=1}^\infty$ is known as the \textit{Thresholding Greedy Algorithm} associated to $\mathcal{B}$ in $\X$. 	Alternatively we can write $\mathcal G_m(x) = \sum_{k \in A_m(x)} e_k^*(x) e_k$, where $A_m(x) = \{ \rho(n) : n \leq m\}$
	is the \textit{greedy set} of $x$: $\min_{k \in A_m(x)} \vert e_k^*(x) \vert \geq \max_{k \notin A_m(x)} \vert e_k^*(x) \vert$.
	
	To study the efficiency of the TGA,  S. V. Konyagin and V. N. Temlyakov introduced in \cite{KT} the so called \textit{greedy bases}. 
	\begin{defn}
		We say that $\mathcal B$ is \textit{greedy} if there exists a constant $C\geq 1$ such that
		\begin{eqnarray*}\label{greedy}
	\Vert x-\mathcal G_m(x)\Vert \leq C\sigma_m(x),\; \forall x\in\mathbb X, \forall m\in\mathbb N,
		\end{eqnarray*}
		where $\sigma_m(x)$ is the $m$-th error of approximation with respect to $\mathcal B$, and it is defined as $$\sigma_m(x,\mathcal B)_{\mathbb X}=\sigma_m(x):=\inf\left\lbrace \left\Vert x-\sum_{n\in C}a_n e_n\right\Vert : \vert C\vert = m, a_n\in\mathbb F\right\rbrace.$$
	\end{defn}

Also, S. V. Konyagin and V. N. Temlyakov characterized greedy bases in terms of \textit{unconditional} bases with the additional property of being \textit{democratic}, i.e,  $\Vert \mathbf{1}_{A}\Vert \leq C_{d}\Vert \mathbf{1}_B\Vert$ for any pair of finite sets $A,B$ with $\vert A\vert \leq \vert B\vert$. Recall that a basis $\mathcal{B}$ in $\X$ is called unconditional if  any rearrangement of the series $\sum_{n=1}^{\infty}e_{n}^{*}(x)e_n$ converges in norm to $x$ for any $x\in \X$. This turns out to be  equivalent the fact that the projections $P_A$
are uniformly bounded on all finite sets $A$, 
i.e. there exists a constant $C\geq 1$ such that
\begin{eqnarray*}\label{sup}
\Vert P_A (x)\Vert \leq C\Vert x\Vert,\;\; \forall x\in\X \hbox{ and }\; \forall A\subset\mathbb{N}.
\end{eqnarray*}

Another important concept in greedy approximation theory is the notion of \textit{quasi-greedy} bases introduced in \cite{KT}. 
\begin{defn}
We say that $\mathcal{B}$ is \textit{quasi-greedy} if there exists a constant $C\geq 1$ such that
\begin{eqnarray}\label{quasi}
\Vert x-\mathcal G_m(x)\Vert \leq C\Vert x\Vert,\; \forall x\in\mathbb X, \forall m\in\mathbb N.
\end{eqnarray}
We denote by $C_q$ the least constant that satisfies \eqref{quasi} and we say that $\mathcal B$ is $C_q$-quasi-greedy.
\end{defn}
Subsequently, P. Wojtaszczyk proved in \cite{Woj} that $\mathcal B$ is quasi-greedy in a quasi-Banach space $\mathbb X$ if and only if the algorithm converges, that is,
$$\lim_{m\rightarrow\infty}\Vert x-\mathcal G_m(x)\Vert = 0,\; \forall x\in\mathbb X.$$

One intermediate concept between greedy and quasi-greedy bases, \textit{almost-greedy} bases, was introduced by S. J. Dilworth et al. in \cite{DKKT}. 
\begin{defn}
	We say that $\mathcal B$ is \textit{almost-greedy} if there exists a constant $C\geq 1$ such that
	\begin{eqnarray}\label{almost}	
	\Vert x-\mathcal{G}_m(x)\Vert \leq C\tilde{\sigma}_m(x),\; \forall x\in\mathbb X,\forall m \in \mathbb N,
	\end{eqnarray}
	where $\tilde{\sigma}_m(x,\mathcal B)_{\mathbb X}=\tilde{\sigma}_m(x):=\inf\lbrace \Vert x-P_A(x)\Vert : \vert A\vert = m\rbrace$.
	We denote by $C_{al}$ the least constant that satisfies \eqref{almost} and we say that $\mathcal B$ is $C_{al}$-almost-greedy.
\end{defn}
In \cite{DKKT}, the authors characterized the almost-greedy bases in terms of quasi-greedy and democratic bases. 

\begin{thm}{\cite[Theorem 3.3]{DKKT}}\label{chal}
	$\mathcal B$ is almost-greedy if and only if $\mathcal B$ is quasi-greedy and democratic.
\end{thm}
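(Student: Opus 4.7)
The plan is to prove the two implications separately. The forward direction (almost-greedy implies quasi-greedy and democratic) is the easier one; the converse requires a careful decomposition along greedy sets combined with standard truncation estimates for quasi-greedy bases.

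For the forward direction, quasi-greediness is nearly trivial: given $x \in \mathbb{X}$ and $m \in \mathbb{N}$, I choose any set $A \subset \mathbb{N}$ of size $m$ disjoint from $\supp(x)$; then $P_A(x) = 0$, so $\tilde{\sigma}_m(x) \leq \|x\|$, and \eqref{almost} yields \eqref{quasi} with $C_q \leq C_{al}$. To prove democracy, pick disjoint finite sets $A, B$ with $|A| \leq |B|$. When $|A| = |B|$, I apply \eqref{almost} to the vector $x_\eta = (1+\eta)\mathbf{1}_B + \mathbf{1}_A$ for small $\eta > 0$: the greedy set of order $|B|$ is exactly $B$, so $x_\eta - \mathcal{G}_{|B|}(x_\eta) = \mathbf{1}_A$, while $\tilde{\sigma}_{|B|}(x_\eta) \leq \|x_\eta - P_A(x_\eta)\| = (1+\eta)\|\mathbf{1}_B\|$; letting $\eta \to 0$ yields $\|\mathbf{1}_A\| \leq C_{al}\|\mathbf{1}_B\|$. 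The case $|A| < |B|$ reduces to the previous one by choosing $B' \subset B$ with $|B'|=|A|$ disjoint from $A$ and invoking the standard fact (see \cite{Woj, DKKT}) that quasi-greedy bases are suppression-unconditional for constant-coefficient vectors, so that $\|\mathbf{1}_{B'}\| \leq C\|\mathbf{1}_B\|$.

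For the reverse direction, fix $x \in \mathbb{X}$, $m \in \mathbb{N}$ and a set $A$ with $|A| = m$. Let $\Lambda = A_m(x)$, $D = \Lambda \setminus A$, $E = A \setminus \Lambda$, so that $|D| = |E|$. I start from the identity
\[
x - \mathcal{G}_m(x) \;=\; P_E(x) + P_{(\Lambda \cup A)^c}(x) \;=\; P_E(x) + (x - P_A(x)) - P_D(x),
\]
which reduces the task to bounding $\|P_E(x)\|$ and $\|P_D(x)\|$ by constant multiples of $\|x - P_A(x)\|$. For $P_D$, the key observation is that $D$ is itself a greedy set of order $|D|$ for the residual $y := x - P_A(x)$ (the coefficients of $y$ on $D \subset \Lambda$ dominate those of $y$ on $\Lambda^c$), hence $P_D(x) = P_D(y) = \mathcal{G}_{|D|}(y)$, and \eqref{quasi} gives $\|P_D(x)\| \leq (1+C_q)\|x - P_A(x)\|$. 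For $\|P_E(x)\|$, setting $\alpha = \min_{k \in \Lambda}|e_k^*(x)|$ and $\beta = \max_{k \notin \Lambda}|e_k^*(x)|$ (so $\beta \leq \alpha$), I chain
\[
\|P_E(x)\| \;\leq\; C_1\,\beta\,\|\mathbf{1}_{E}\| \;\leq\; C_1 C_d\,\beta\,\|\mathbf{1}_{D}\| \;\leq\; C_1 C_d\,\alpha\,\|\mathbf{1}_{D}\| \;\leq\; C_1 C_d C_2\,\|P_D(x)\|,
\]
where the first and last inequalities are the standard upper and lower truncation estimates for quasi-greedy bases, and the middle one is democracy applied to the pair $E, D$ of equal cardinality.

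I expect the main obstacle to be precisely this chain of truncation estimates in the reverse direction: the bounds $\|P_E(x)\| \leq C_1\,\beta\,\|\mathbf{1}_E\|$ and $\alpha\,\|\mathbf{1}_D\| \leq C_2\,\|P_D(x)\|$ are not immediate from \eqref{quasi} alone and rely on auxiliary lemmas about quasi-greedy bases (uniform boundedness of truncation operators, comparability of $\|\mathbf{1}_A\|$ with its signed variants $\|\mathbf{1}_{\varepsilon A}\|$, and so on). The passage to complex scalars, where the signs are not merely $\pm 1$, introduces additional constant factors but no genuinely new difficulty.
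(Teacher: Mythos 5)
This statement is quoted in the paper from \cite{DKKT} without proof, so there is no internal argument to compare against; measured against the standard proof in \cite{DKKT}, your outline is essentially that proof and is sound. The forward direction is fine, and in the reverse direction the decomposition $x-\mathcal G_m(x)=P_E(x)+(x-P_A(x))-P_D(x)$ with $D=\Lambda\setminus A$, $E=A\setminus\Lambda$, the observation that $D$ is a greedy set of $x-P_A(x)$, and the chain $\|P_E(x)\|\lesssim\beta\|\mathbf 1_E\|\lesssim\beta\|\mathbf 1_D\|\le\alpha\|\mathbf 1_D\|\lesssim\|P_D(x)\|$ are exactly the right ingredients; the auxiliary estimates you defer to are precisely Lemma \ref{conv}, Proposition \ref{truncation} and Lemma \ref{propC} of this paper (plus the comparability of $\|\mathbf 1_{\varepsilon A}\|$ with $\|\mathbf 1_A\|$ for quasi-greedy bases).

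Two loose ends you should tie up. First, you only prove democracy for \emph{disjoint} pairs $A,B$, whereas the definition requires arbitrary pairs with $|A|\le|B|$; the fix is routine (interpose a third set $D$ with $|D|=|A|$ and $D\cap(A\cup B)=\emptyset$, and apply your disjoint-case estimate twice), but it must be said, since this same three-set trick is what the paper itself uses when proving super-democracy from semi-greediness. Second, in deducing quasi-greediness you choose $A$ of size $m$ disjoint from $\supp(x)$, which need not exist when the support is cofinite; instead take $A$ far out in the support, so that $\|P_A(x)\|$ is arbitrarily small and $\tilde\sigma_m(x)\le\|x\|$ still follows. Neither point affects the validity of the approach.
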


We will use the notion of \textit{super-democracy} instead of democracy. This is a classical concept in this theory.
\begin{defn}
We say that $\mathcal B$ is \textit{super-democratic} if there exists a constant $C\geq 1$ such that
\begin{eqnarray}\label{super}
\Vert \mathbf{1}_{\varepsilon A}\Vert \leq C\Vert \mathbf{1}_{\eta B}\Vert,
\end{eqnarray}
for any pair of finite sets $A$ and $B$ such that $\vert A\vert\leq \vert B\vert$ and any choice $\vert \varepsilon\vert = \vert \eta \vert = 1$.
We denote by $C_{sd}$ the least constant that satisfies \eqref{super} and we say that $\mathcal B$ is $C_{sd}$-super-democratic.
\end{defn}

\begin{rem}\label{remark}
It is well known that in Theorem \ref{chal} we can replace democracy by super-democracy (see for instance \cite[Theorem 1.3]{BBG}).
\end{rem}

On the other hand, S. J. Dilworth, N. J. Kalton and D. Kutzarova introduced in \cite{DKK} the concept of \textit{semi-greedy} bases. This concept was born as an enhancement of the TGA to improve the rate of convergence. To study the notion of semi-greediness, we need to define the \textit{Thresholding Chebyshev Greedy Algorithm}: let $A_m(x)$ be the greedy set of $x$ of cardinality $m$. Define the \textit{$m$-th Chebyshev-greedy sum} as any element $\mathcal{CG}_m(x)\in span\lbrace e_i : i\in A_m(x)\rbrace$ such that 
$$\Vert x-\mathcal{CG}_m(x)\Vert = \min\left\lbrace \left\Vert x-\sum_{n\in A_m(x)}a_n e_n\right\Vert : a_n\in\mathbb F\right\rbrace.$$
The collection $\lbrace \mathcal{CG}_m\rbrace_{m=1}^\infty$ is the \textit{Thresholding Chebyshev Greedy Algorithm}.

\begin{defn}
	We say that $\mathcal B$ is \textit{semi-greedy} if there exists a constant $C\geq 1$ such that
	\begin{eqnarray}\label{semig}
	\Vert x-\mathcal{CG}_m(x)\Vert \leq C\sigma_m(x),\; \forall x\in\mathbb X, \forall m\in\mathbb N.
	\end{eqnarray}
	We denote by $C_s$ the least constant that satisfies \eqref{semig} and we say that $\mathcal B$ is $C_s$-semi-greedy.
\end{defn}

In \cite{DKK}, the following theorem is proved:
\begin{thm}{\cite[Theorem 3.2]{DKK}}\label{dkk}
Every almost-greedy basis in a Banach space is semi-greedy.
\end{thm}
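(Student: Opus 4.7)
My plan rests on the characterization provided by Theorem~\ref{chal} and Remark~\ref{remark}: an almost-greedy basis is the same as a $C_q$-quasi-greedy and $C_{sd}$-super-democratic basis. Fix $x\in\X$, $m\in\N$ and $\eta>0$, choose a near-optimal $m$-term approximation $y=\sum_{n\in B}b_n e_n$ with $|B|=m$ and $\|x-y\|\le(1+\eta)\sigma_m(x)$, and arrange for $y$ to be the Chebyshev projection of $x$ onto $\mathrm{span}\{e_n:n\in B\}$. Write $A=A_m(x)$, $D=A\setminus B$, $E=B\setminus A$; the equality $|D|=|E|$ will be crucial.

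Since $\mathcal{CG}_m(x)$ realizes the minimum of $\|x-z\|$ over $z\in\mathrm{span}\{e_n:n\in A\}$, it suffices to produce one such $\tilde y$ with $\|x-\tilde y\|\lesssim\|x-y\|$. I will take
\[
\tilde y:=P_A(y)+P_D(x)\in\mathrm{span}\{e_n:n\in A\},
\]
and a coordinate-by-coordinate check on the disjoint pieces $A\cap B$, $D$, $E$ and $(A\cup B)^{c}$ gives
\[
x-\tilde y=(x-y)+P_E(y)-P_D(x),
\]
so $\|x-\mathcal{CG}_m(x)\|\le\|x-y\|+\|P_E(y)\|+\|P_D(x)\|$. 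The task reduces to bounding $\|P_E(y)\|$ and $\|P_D(x)\|$ by constant multiples of $\|x-y\|$.

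For $\|P_D(x)\|$, since $D\cap B=\emptyset$ we have $P_D(x)=P_D(x-y)$, and since $D\subseteq A$ also $|e_n^*(x-y)|=|e_n^*(x)|\ge m_1:=\min_{n\in A}|e_n^*(x)|$ for every $n\in D$. The Dilworth--Kalton--Kutzarova truncation lemma for $C_q$-quasi-greedy bases applied to $x-y$ at level $m_1$, together with super-democracy, yields $\|m_1\mathbf{1}_{\varepsilon D}\|\le 2C_qC_{sd}\|x-y\|$ for $\varepsilon_n=\mathrm{sgn}(e_n^*(x))$. I then perform a dyadic layering $D=\bigsqcup_{j\ge 0}D_j$ with $D_j=\{n\in D:2^jm_1\le|e_n^*(x)|<2^{j+1}m_1\}$, apply the same truncation argument at each threshold $2^jm_1$, and use the quasi-greedy upper estimate $\|P_{D_j}(x)\|\lesssim(\max_{D_j}|e_n^*(x)|)\,\|\mathbf{1}_{D_j}\|$ to reduce to the normalized version. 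For $\|P_E(y)\|$, the analogous scheme proceeds by splitting $E$ by the size of $|b_n|$: on $\{n\in E:|b_n|\lesssim m_1\}$ the same method applies directly, while on each dyadic layer $\{n\in E:|b_n|\asymp 2^jm_1\}$ with $j\ge 1$ one exploits $|e_n^*(x-y)|=|e_n^*(x)-b_n|\asymp 2^jm_1$, placing that layer in a super-level set of $x-y$ and reducing again to truncation-and-super-democracy.

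The main obstacle is to combine the per-layer estimates into a single bound independent of the dynamic range of the coefficients of $x$ and $y$. A naive layer-by-layer triangle inequality produces a bound of size $O(\|x-y\|)$ for each layer, whose total would grow with the number of active layers; the delicate step, which is the technical heart of the argument in \cite{DKK}, is to strengthen the per-layer estimate---using the interplay of quasi-greediness and super-democracy at each scale and the quasi-greedy upper estimate for disjointly-supported sums---so as to obtain geometric decay in $j$, yielding a convergent total with final constant depending only on $C_q$ and $C_{sd}$.
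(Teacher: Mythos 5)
Your reduction is set up correctly (almost-greedy $=$ quasi-greedy $+$ super-democratic, and it suffices to exhibit one competitor supported on $A_m(x)$ whose distance to $x$ is controlled by $\Vert x-y\Vert$), but the proof has a genuine gap exactly where you flag it: the bounds $\Vert P_D(x)\Vert\lesssim\Vert x-y\Vert$ and $\Vert P_E(y)\Vert\lesssim\Vert x-y\Vert$ are never established. Your dyadic layering does not close it. Running your own scheme on a layer $D_j=\{n\in D: 2^jm_1\le|e_n^*(x)|<2^{j+1}m_1\}$, the combination of Lemma~\ref{conv}, super-democracy and Lemma~\ref{propC} applied to the super-level set $G_j=\{n:|e_n^*(x-y)|\ge 2^jm_1\}$ gives $\Vert P_{D_j}(x)\Vert\le 2^{j+1}m_1\,C_{sd}^2\Vert\mathbf{1}_{\varepsilon G_j}\Vert\le 4C_qC_{sd}^2\Vert x-y\Vert$: the factor $2^j$ from the coefficient size exactly cancels the factor $2^{-j}$ from Lemma~\ref{propC}, so each layer contributes a fixed multiple of $\Vert x-y\Vert$ and there is no geometric decay in $j$ to be "strengthened" into existence. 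The total then grows with the dynamic range of the coefficients, and the argument as proposed does not yield a uniform constant.

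The mechanism that actually removes the dynamic-range problem is the truncation operator, not layering. The paper (following \cite{DKK}) takes $\alpha=\max_{j\notin A_m(x)}|e_j^*(x)|$ and the competitor $w=\sum_{i\in A_m(x)}T_\alpha(y_i)e_i+P_{A_m^c(x)}(x)$, where $y=x-z$. Then $\supp(x-w)\subset A_m(x)$, and $w$ splits into $T_\alpha(x-z)$, which Proposition~\ref{truncation} bounds by $C_q\Vert x-z\Vert$ in a single stroke regardless of how large the coefficients on $A_m(x)$ are, plus a remainder supported on $B\setminus A_m(x)$ whose coefficients are \emph{uniformly} at most $2\alpha$; only this last piece needs Lemma~\ref{conv}, super-democracy and Lemma~\ref{propC} (applied once, to a single greedy set $\Gamma$ of $x-z$ of cardinality $|B\setminus A_m(x)|$). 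In other words, instead of discarding $x$'s large coefficients on $A\setminus B$ and then struggling to bound the discarded projection, one keeps the truncated coefficients of $x-z$ there, so that no projection onto a non-greedy set with widely varying coefficients ever has to be estimated. To repair your write-up you should replace $\tilde y=P_A(y)+P_D(x)$ by the truncated competitor above; as it stands, the "technical heart" you defer to is precisely the part of the proof that is missing, and the route you sketch for it does not work.
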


In this paper we study the converse of this theorem. In \cite{DKK}, the authors established the following "converse" theorem:
 \begin{thm}{\cite[Theorem 3.6]{DKK}}
 	Assume that $\mathcal B$ is a semi-greedy basis in a Banach space $\mathbb X$ which has finite cotype. Then, $\mathcal B$ is almost-greedy.
 \end{thm}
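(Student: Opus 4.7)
The strategy is to invoke the characterization of almost-greedy bases (Theorem \ref{chal} together with Remark \ref{remark}) and to establish that semi-greediness in a Banach space of finite cotype forces both quasi-greediness and super-democracy.

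\textbf{Step 1: Quasi-greediness.} Fix $x \in \mathbb X$ and $m \in \mathbb N$, and let $A = A_m(x)$. Since $\mathcal{CG}_m(x) \in \mathrm{span}\{e_i : i \in A\}$,
$$x - \mathcal G_m(x) = P_{A^c}(x) = P_{A^c}\bigl(x - \mathcal{CG}_m(x)\bigr).$$
Semi-greediness already gives $\|x - \mathcal{CG}_m(x)\| \leq C_s \sigma_m(x) \leq C_s \|x\|$. The difficulty is that $\|P_{A^c}\|$ is not uniformly bounded, so one has to argue indirectly: apply semi-greediness again to carefully chosen perturbations of $x$ (for instance, adding large multiples of $e_j$ for $j$ outside $\mathrm{supp}(x)$ so that the new greedy set forms an initial segment controlled by $K_b$) to transfer the Chebyshev bound to a bound on the greedy remainder. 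This step does not require finite cotype and is essentially carried out in \cite{DKK}.

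\textbf{Step 2: Super-democracy.} Let $A,B \subset \mathbb N$ be disjoint finite sets with $|A| \leq |B|$, and fix signs $\varepsilon,\eta$. For $t > 1$ set $x_t = \mathbf{1}_{\varepsilon A} + t\,\mathbf{1}_{\eta B}$. The greedy set of cardinality $|B|$ of $x_t$ is exactly $B$, and using $t\mathbf{1}_{\eta B}$ as a competitor gives $\sigma_{|B|}(x_t) \leq \|\mathbf{1}_{\varepsilon A}\|$. Semi-greediness then produces Chebyshev-optimal coefficients $(b_i)_{i \in B}$ with
$$\Bigl\|\mathbf{1}_{\varepsilon A} + \textstyle\sum_{i \in B} b_i e_i\Bigr\| \leq C_s \|\mathbf{1}_{\varepsilon A}\|.$$
Here finite cotype enters: averaging over independent sign flips on the coordinates of $B$ and invoking a Kahane-type inequality (valid because $\mathbb X$ has finite cotype) one may compare this norm with $\|\mathbf{1}_{\eta B}\|$ up to a multiplicative constant. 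Passing to the limit $t \downarrow 1$, one obtains $\|\mathbf{1}_{\eta B}\| \lesssim \|\mathbf{1}_{\varepsilon A}\|$, which is super-democracy.

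\textbf{Main obstacle.} The delicate point is Step 2. Semi-greediness delivers only the norm of the \emph{best} linear combination on $B$, whereas we need a lower bound on $\|\mathbf{1}_{\eta B}\|$. A priori the Chebyshev minimizer could cancel most of the $B$-mass, and it is precisely finite cotype (through Kahane-type sign averaging) that rules this out in a uniform, quantitative way. Dropping the cotype hypothesis, which is the paper's goal, therefore requires a genuinely different argument — presumably a direct combinatorial control of the Chebyshev minimizer rather than one mediated by randomization.
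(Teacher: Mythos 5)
Your overall skeleton --- reduce to showing that semi-greediness implies quasi-greediness and super-democracy, then invoke Theorem \ref{chal} and Remark \ref{remark} --- is exactly the paper's route. But both steps as you present them have genuine problems.

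Step 2 does not work as written, for two reasons. First, the inequality you aim for is in the wrong direction: super-democracy with $\vert A\vert\leq\vert B\vert$ requires $\Vert\mathbf{1}_{\varepsilon A}\Vert\leq C\Vert\mathbf{1}_{\eta B}\Vert$, whereas you set out to prove $\Vert\mathbf{1}_{\eta B}\Vert\lesssim\Vert\mathbf{1}_{\varepsilon A}\Vert$. Second, and more fundamentally, by perturbing $B$ itself (taking $x_t=\mathbf{1}_{\varepsilon A}+t\mathbf{1}_{\eta B}$) you let the Chebyshev minimizer act on $B$: it may simply choose $c_i=t\eta_i$, so that $x_t-\mathcal{CG}_{\vert B\vert}(x_t)=\mathbf{1}_{\varepsilon A}$ and the resulting inequality $\Vert\mathbf{1}_{\varepsilon A}\Vert\leq C_s\Vert\mathbf{1}_{\varepsilon A}\Vert$ carries no information about $B$ whatsoever. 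Your proposed rescue --- ``averaging over sign flips and invoking a Kahane-type inequality'' --- has nothing to average: the Chebyshev minimizer is a single deterministic choice, not a random one, and cotype cannot prevent it from cancelling the $B$-mass. The correct device (used both in \cite[Proposition 3.3]{DKK} and in the paper) is an auxiliary set $D$ with $\vert D\vert=\vert A\vert$ and $D>(A\cup B)$: one runs the algorithm on $z=\mathbf{1}_{\varepsilon A}+(1+\delta)\mathbf{1}_D$, so the Chebyshev correction is supported on $D$, which is separated from $A$, and the Schauder projection onto the initial segment strips it off, giving $\Vert\mathbf{1}_{\varepsilon A}\Vert\leq K_bC_s\Vert\mathbf{1}_D\Vert$; a symmetric argument with $y=(1+\delta)\mathbf{1}_{\eta B}+\mathbf{1}_D$ gives $\Vert\mathbf{1}_D\Vert\leq 2K_bC_s\Vert\mathbf{1}_{\eta B}\Vert$. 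No cotype is needed here at all.

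Step 1 is not a proof: you state the identity $x-\mathcal{G}_m(x)=P_{A^c}(x-\mathcal{CG}_m(x))$, observe the obstacle that $P_{A^c}$ is unbounded, and then defer entirely to ``carefully chosen perturbations\dots essentially carried out in \cite{DKK}.'' Worse, your attribution of where finite cotype enters is inverted: in \cite{DKK} the cotype hypothesis is used precisely for the quasi-greedy implication, not for democracy, and the whole point of the present paper is to replace that argument by an elementary one. (The paper's version again uses an auxiliary block $D$ beyond $\supp(x)$ with weight $\alpha+\delta$, $\alpha=\min_{j\in A_m(x)}\vert e_j^*(x)\vert$, so that $D$ becomes the greedy set, and then controls $\Vert x-\mathcal{G}_m(x)\Vert$ and $\Vert\alpha\mathbf{1}_D\Vert$ separately via Schauder projections, yielding $C_q\leq K_b(2+3(K_bC_s)^2)$.) As it stands, your write-up neither proves the theorem under the cotype hypothesis nor correctly identifies which half of the argument that hypothesis was needed for.
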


The objective here is to show that the condition of the finite cotype in the last theorem is not necessary. The main result is the following:
\begin{thm}\label{equiv}
	Assume that $\mathcal B$ is a Schauder basis in a Banach space $\X$. 
	\begin{itemize}
		\item[a)] If $\mathcal B$ is $C_{q}$-quasi-greedy and $C_{sd}$-super-democratic, then $\mathcal B$ is $C_s$-semi-greedy with constant $C_s\leq C_q+4C_qC_{sd}$.
		\item[b)] If $\mathcal B$ is $C_s$-semi-greedy, then $\mathcal B$ is $C_{sd}$-super-democratic with constant $C_{sd}\leq 2(C_sK_b)^2$ and $C_{q}$-quasi-greedy with constant $C_{q}\leq K_b(2+3(K_bC_s)^2)$.
	\end{itemize}

\end{thm}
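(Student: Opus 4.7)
The plan for part (a) exploits the Chebyshev-optimality of $\mathcal{CG}_m(x)$. Given any $m$-term approximation $z=\sum_{n\in B}b_ne_n$ with $|B|\leq m$ and $A=A_m(x)$, for every $v\in\mathrm{span}\{e_i:i\in A\}$ one has $\|x-\mathcal{CG}_m(x)\|\leq\|x-v\|$. The natural choice $v=P_A(z)$ gives, by a coefficient-wise calculation, the identity $x-P_A(z)=(x-z)+P_{B\setminus A}(z)$ and hence $\|x-\mathcal{CG}_m(x)\|\leq\|x-z\|+\|P_{B\setminus A}(z)\|$. The real work is then bounding $\|P_{B\setminus A}(z)\|$ by a multiple of $\|x-z\|$. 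With $\alpha=\min_{n\in A}|e_n^*(x)|$, two ingredients are in play: the classical quasi-greedy truncation lemma applied to $x-z$, which (since $|e_n^*(x-z)|=|e_n^*(x)|\geq\alpha$ on $A\setminus B$) gives $\alpha\|\mathbf{1}_{\varepsilon(A\setminus B)}\|\leq 2C_q\|x-z\|$, together with super-democracy (using $|B\setminus A|\leq|A\setminus B|$), which converts this into $\alpha\|\mathbf{1}_{\eta(B\setminus A)}\|\leq 2C_qC_{sd}\|x-z\|$ for every choice of signs $\eta$. Splitting $B\setminus A$ according to whether $|b_n|\leq 2\alpha$ (handled by a convex-combination of sign-sums together with the previous bound, yielding the $4C_qC_{sd}$-factor) or $|b_n|>2\alpha$ (in which case $|b_n|<2|e_n^*(x-z)|$ and the contribution is absorbed into a $C_q\|x-z\|$ term via the quasi-greedy residual of $x-z$) then gives the claimed constant $C_q+4C_qC_{sd}$.

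For part (b), super-democracy: given finite $A,B$ with $|A|\leq|B|$ (WLOG disjoint, by a standard reduction) and unimodular $\varepsilon,\eta$, I would test \eqref{semig} on $x=\mathbf{1}_{\varepsilon A}+\delta\mathbf{1}_{\eta B}$ with $\delta>0$ small. The greedy set of size $|A|$ is then $A$, so $\sigma_{|A|}(x)\leq\delta\|\mathbf{1}_{\eta B}\|$ and hence $\|x-\mathcal{CG}_{|A|}(x)\|\leq C_s\delta\|\mathbf{1}_{\eta B}\|$. Since $\mathcal{CG}_{|A|}(x)$ is supported on $A$, the $B$-part of the residual equals $\delta\mathbf{1}_{\eta B}$; two nested partial-sum projections of the Schauder basis (each bounded by $K_b$, together producing the $K_b^2$ factor) isolate this component and, after taking $\delta\to 0$ and rearranging, produce $\|\mathbf{1}_{\varepsilon A}\|\leq 2(C_sK_b)^2\|\mathbf{1}_{\eta B}\|$.

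For part (b), quasi-greediness: for fixed $x,m$ with $A=A_m(x)$, I would write
$$\|x-\mathcal G_m(x)\|\leq\|x-\mathcal{CG}_m(x)\|+\|\mathcal{CG}_m(x)-\mathcal G_m(x)\|\leq C_s\|x\|+\|\mathcal{CG}_m(x)-\mathcal G_m(x)\|,$$
using \eqref{semig} with $z=0$ (so $\sigma_m(x)\leq\|x\|$) for the first term. The correction $\mathcal{CG}_m(x)-\mathcal G_m(x)$ lives in $\mathrm{span}\{e_i:i\in A\}$, and its norm is estimated using the Schauder basis constant $K_b$ together with the super-democratic inequality just established, since its coefficients are controlled via the Chebyshev optimality. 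Tracking the multiplicative constants through this estimate gives $C_q\leq K_b(2+3(K_bC_s)^2)$.

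The main obstacle in (a) is controlling $\|P_{B\setminus A}(z)\|$ in a general Banach space: without unconditionality, set-projections are not bounded, and the coefficients $b_n$ on $B\setminus A$ are a priori unconstrained by $\|x-z\|$, so the case analysis on the size of $|b_n|$ relative to $\alpha$ together with the coupling of the truncation lemma and super-democracy is unavoidable. In (b) the obstacle is that the semi-greedy hypothesis \eqref{semig} involves the adaptive Chebyshev approximation rather than a fixed linear projection, so extracting a purely ``sign-sum'' comparison between $\mathbf{1}_{\varepsilon A}$ and $\mathbf{1}_{\eta B}$ requires a non-trivial test vector and two Schauder-type partial-sum projections, which is what forces the factor $K_b^2$ into the democratic constant.
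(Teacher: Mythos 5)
Your high-level ingredients are the right ones and you even land on the paper's constants, but each of the three arguments breaks at its central step, and in each case the break is the same phenomenon: you end up needing to bound a coordinate projection $P_E$ onto a set $E$ that is neither a greedy set nor an initial segment, which is exactly what a non-unconditional basis does not provide.

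In (a), taking $v=P_A(z)$ reduces everything to bounding $\Vert P_{B\setminus A}(z)\Vert$, and your case $|b_n|>2\alpha$ is where this fails: there $b_n=e_n^*(x)-e_n^*(x-z)$ with $|e_n^*(x-z)|>\alpha$, so after removing the harmless $P_{B\setminus A}(x)$ part you are left with $\Vert P_{\Lambda_2}(x-z)\Vert$ for an arbitrary subset $\Lambda_2$ of $\supp(x-z)$; quasi-greediness controls greedy projections and the truncation operator, not this. The paper's competitor is not $P_A(z)$ but $x-w$ with $w=\sum_i T_\alpha(y_i)e_i+\sum_{i\in B\setminus A}(e_i^*(x)-T_\alpha(y_i))e_i$: truncating $x-z$ globally (Proposition \ref{truncation} gives $\Vert T_\alpha(x-z)\Vert\le C_q\Vert x-z\Vert$) is precisely what guarantees that the leftover coefficients on $B\setminus A$ all have modulus at most $2\alpha$, so that only your ``small'' case survives. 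A second, smaller issue: the inequality $\alpha\Vert\mathbf 1_{\varepsilon(A\setminus B)}\Vert\le 2C_q\Vert x-z\Vert$ does not follow from Lemma \ref{propC}, because $A\setminus B$ need not be a greedy set of $x-z$; one must pass to the greedy set $\Gamma$ of $x-z$ of the same cardinality (which has a larger minimal coefficient) and then use super-democracy to return to $B\setminus A$, as in \eqref{f3}.

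In (b) both arguments have the same defect. For super-democracy, your test vector $x=\mathbf 1_{\varepsilon A}+\delta\mathbf 1_{\eta B}$ with $\delta$ small has greedy set $A$, so the Chebyshev sum is free to annihilate $\mathbf 1_{\varepsilon A}$; the bound $\Vert x-\mathcal{CG}_{|A|}(x)\Vert\le C_s\delta\Vert\mathbf 1_{\eta B}\Vert$ degenerates as $\delta\to 0$ and yields no comparison between $\Vert\mathbf 1_{\varepsilon A}\Vert$ and $\Vert\mathbf 1_{\eta B}\Vert$. You must make the greedy set disjoint from the piece you want to survive in the residual, and you must be able to isolate that piece with partial-sum projections; this forces the auxiliary set $D>(A\cup B)$ with $|D|=|A|$ and two separate applications of \eqref{semig} ($\Vert\mathbf 1_{\varepsilon A}\Vert\le C_sK_b\Vert\mathbf 1_D\Vert$ and $\Vert\mathbf 1_D\Vert\le 2C_sK_b\Vert\mathbf 1_{\eta B}\Vert$), which is where $2(C_sK_b)^2$ actually comes from. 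For quasi-greediness, the correction term is $\mathcal G_m(x)-\mathcal{CG}_m(x)=P_{A_m(x)}(x-\mathcal{CG}_m(x))$, again a projection onto an arbitrary set: neither $K_b$ nor super-democracy bounds it, and the naive coefficient estimate costs an unbounded factor $\varphi(m)$. The paper instead runs two tests with a block $D>\supp(x)$, $|D|=m$, at height $\alpha=\min_{j\in A_m(x)}|e_j^*(x)|$, obtaining $\Vert x-\mathcal G_m(x)\Vert\le K_bC_s(\Vert x\Vert+\Vert\alpha\mathbf 1_D\Vert)$ and $\Vert\alpha\mathbf 1_D\Vert\le 2K_bC_s\Vert x\Vert$, and then needs a further step (absent from your sketch, and the source of the ``$2$'' in $K_b(2+3(K_bC_s)^2)$) to pass from finitely supported $x$ to general $x$ via $S_N$.
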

 \begin{rem}\label{remark2}
	S. J. Dilworth et al. (\cite{DKK}) proved the item $a)$ with the bound $C_s=O(C_q^2C_d)$, where $C_d$ is the democracy constant. Here, we slightly relax this bound proving that $C_s=O(C_{q}C_{sd})$. 
\end{rem}
 \begin{cor}\label{charalmo}
 	If $\mathcal B$ is a Schauder basis in $\mathbb X$, $\mathcal B$ is almost-greedy if and only if $\mathcal B$ is semi-greedy.
 \end{cor}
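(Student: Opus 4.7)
The plan is to derive Corollary \ref{charalmo} directly from Theorem \ref{equiv} combined with the characterization of almost-greedy bases recalled in Theorem \ref{chal} and Remark \ref{remark}. Both directions reduce to a short chain of citations; there is essentially no new work at the level of the corollary itself, since the heavy lifting has been done in stating Theorem \ref{equiv}.

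For the forward implication, suppose $\mathcal B$ is almost-greedy. Then Theorem \ref{chal}, together with Remark \ref{remark} which allows us to replace democracy by super-democracy, ensures that $\mathcal B$ is $C_q$-quasi-greedy and $C_{sd}$-super-democratic for some constants $C_q, C_{sd}\ge 1$. Applying part \textbf{a)} of Theorem \ref{equiv} I immediately obtain that $\mathcal B$ is $C_s$-semi-greedy with the explicit bound $C_s\le C_q+4C_qC_{sd}$. (This direction coincides with Theorem \ref{dkk}, but in our setting it falls out of the unified framework of Theorem \ref{equiv}.)

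For the reverse implication, assume $\mathcal B$ is $C_s$-semi-greedy. Part \textbf{b)} of Theorem \ref{equiv} then provides, in a single step, both the quasi-greedy constant $C_q\le K_b(2+3(K_bC_s)^2)$ and the super-democracy constant $C_{sd}\le 2(C_sK_b)^2$. Invoking Theorem \ref{chal} (with Remark \ref{remark}) on these two properties yields that $\mathcal B$ is almost-greedy, which closes the equivalence.

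The corollary itself presents no obstacle: the whole difficulty is concentrated in Theorem \ref{equiv}, especially in item \textbf{b)}, where one must extract both quasi-greediness and super-democracy from semi-greediness alone, without any geometric hypothesis on $\mathbb X$ such as finite cotype. Once Theorem \ref{equiv} is available, the proof of Corollary \ref{charalmo} is a two-line formal deduction, and I would present it as such.
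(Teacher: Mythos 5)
Your proposal is correct and follows exactly the paper's own argument: the paper proves the corollary by citing Theorem \ref{equiv}, Theorem \ref{chal} and Remark \ref{remark}, which is precisely the chain of deductions you spell out. Nothing is missing.
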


		\end{section}
	
	\section{Preliminary results}\label{lemmas}
	To prove Theorem \ref{equiv}, we need the following technical results that we can find in \cite{BBG} and \cite{DKKT}.
	\subsection{Convexity lemma}
	\begin{lem}{\cite[Lemma 2.7]{BBG}}\label{conv}
	For every finite set $A\subset \mathbb N$, we have
	$$\text{co}\lbrace \mathbf{1}_{\varepsilon A} : \vert \varepsilon\vert=1\rbrace = \left\lbrace \sum_{n\in A}z_n e_n : \vert z_n\vert \leq 1\right\rbrace,$$
	where $\text{co}S=\lbrace \sum_{j=1}^n \alpha_j x_j : x_j \in S, 0\leq \alpha_j\leq 1, \sum_{j=1}^n \alpha_j = 1, n\in\mathbb N\rbrace$.
	\end{lem}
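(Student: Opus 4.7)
The plan is to prove the two inclusions separately. For the easy direction, denote the right-hand side by $D_A := \{\sum_{n\in A} z_n e_n : |z_n|\leq 1\}$. I would first observe that $D_A$ is convex: if $x=\sum z_n e_n$ and $y=\sum w_n e_n$ lie in $D_A$ and $\alpha\in[0,1]$, then $\alpha x+(1-\alpha)y=\sum(\alpha z_n+(1-\alpha)w_n)e_n$ with $|\alpha z_n+(1-\alpha)w_n|\leq\alpha|z_n|+(1-\alpha)|w_n|\leq1$. Since every sign vector $\mathbf{1}_{\varepsilon A}$ belongs to $D_A$ (its coefficients have modulus $1$), this yields $\text{co}\{\mathbf{1}_{\varepsilon A}:|\varepsilon|=1\}\subseteq D_A$.

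For the nontrivial inclusion, given $x=\sum_{n\in A}z_n e_n\in D_A$, I would write each coefficient as a convex combination of two unit scalars. Concretely, for each $n\in A$ I choose $u_n^+,u_n^-\in\mathbb F$ with $|u_n^\pm|=1$ and $\alpha_n^\pm\in[0,1]$ with $\alpha_n^++\alpha_n^-=1$ such that
\[
z_n=\alpha_n^+ u_n^+ + \alpha_n^- u_n^-.
\]
In the real case set $u_n^\pm=\pm1$ and $\alpha_n^\pm=(1\pm z_n)/2$. In the complex case write $z_n=|z_n|e^{i\theta_n}$ (with any choice of $\theta_n$ when $z_n=0$) and set $u_n^\pm=\pm e^{i\theta_n}$ together with $\alpha_n^\pm=(1\pm|z_n|)/2$. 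In either situation the hypothesis $|z_n|\leq1$ is exactly what makes both weights nonnegative.

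Next I would substitute and expand the resulting product to obtain
\[
x=\sum_{n\in A}(\alpha_n^+ u_n^+ + \alpha_n^- u_n^-)\,e_n = \sum_{\sigma\in\{+,-\}^A}\Bigl(\prod_{n\in A}\alpha_n^{\sigma(n)}\Bigr)\Bigl(\sum_{n\in A} u_n^{\sigma(n)} e_n\Bigr).
\]
The coefficients $\prod_{n\in A}\alpha_n^{\sigma(n)}$ are nonnegative and sum to $\prod_{n\in A}(\alpha_n^++\alpha_n^-)=1$, while for each $\sigma$ the inner sum is a sign vector $\mathbf{1}_{\varepsilon A}$ with $\varepsilon_n=u_n^{\sigma(n)}$. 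Thus $x$ is displayed explicitly as a convex combination of at most $2^{|A|}$ vectors from $\{\mathbf{1}_{\varepsilon A}:|\varepsilon|=1\}$, completing the proof. The only subtlety worth flagging is the dichotomy between the real and complex fields in choosing the two-point decomposition of each $z_n$; both cases, however, reduce to the elementary observation that the closed unit ball of $\mathbb F$ equals the convex hull of two antipodal points on its boundary, so no genuine obstacle arises.
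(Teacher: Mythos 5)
Your proof is correct. Note that the paper itself gives no argument for this lemma --- it is quoted verbatim from \cite[Lemma 2.7]{BBG} --- and your coordinate-wise decomposition of each $z_n$ as a convex combination of two unimodular scalars, followed by the multilinear expansion into $2^{|A|}$ sign vectors, is essentially the standard argument used there (the cited source organizes it as an induction on the number of coordinates with $\vert z_n\vert<1$, replacing one such coordinate at a time, but the underlying idea --- that the closed unit ball of $\mathbb F$ is the convex hull of pairs of antipodal unimodular points --- is the same).
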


As a consequence, for any finite sequence $(z_n)_{n\in A}$ with $z_n\in\mathbb F$ for all $n\in A$,
		$$\left\Vert \sum_{n\in A}z_n e_n\right\Vert \leq \max_{n\in A}\vert z_n\vert \varphi(\vert A\vert),$$
		where $\varphi(m)=\sup_{\vert A\vert= m, \vert\varepsilon\vert=1}\Vert \mathbf{1}_{\varepsilon A}\Vert$.

\subsection{The truncation operator}
For each $\alpha>0$, we define the \textit{truncation function} of $z\in\mathbb F$ as
$$T_\alpha (z) = \alpha\sgn(z),\; \vert z\vert > \alpha,\;\; T_\alpha(z) = z,\; \vert z\vert \leq\alpha.$$
We can extend $T_\alpha$ to an operator in $\X$ by
$$T_\alpha(x)=\sum_{i=1}^\infty T_\alpha(e_i^*(x))e_i = \alpha\mathbf{1}_{\varepsilon\Gamma_\alpha}+P_{\Gamma_\alpha^c}(x),$$
where $\Gamma_\alpha = \lbrace n : \vert e_n^*(x)\vert > \alpha\rbrace$ and $\varepsilon_j = \sgn(e_j^*(x))$ with $j\in \Gamma_\alpha$.
Hence, this is a well-defined operator for all $x\in \X$ since $\Gamma_\alpha$ is a finite set.

This operator was introduced in \cite{DKK} to prove Theorem \ref{dkk} showing that for quasi-greedy bases, this operator is uniformly bounded. A slight improvement of the boundedness constant was given in \cite{BBG}.

\begin{prop}{\cite[Lemma 2.5]{BBG}}\label{truncation}
	Assume that $\mathcal B$ is $C_q$-quasi-greedy basis in a Banach space $\X$. Then, for every $\alpha>0$,
	$$\Vert T_\alpha(x)\Vert\leq C_q\Vert x\Vert,\; \forall x\in\X.$$
\end{prop}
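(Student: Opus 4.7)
My strategy is to realize $T_\alpha$ as a convex combination of operators of the form $I - \mathcal G_m$, each of which is bounded by $C_q$ thanks to the quasi-greedy hypothesis, and then invoke convexity of the norm to transfer the bound to $T_\alpha$ itself.

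Set $\Gamma = \Gamma_\alpha$; this is a finite set because $e_n^*(x)\to 0$ for any Schauder expansion. Directly from the definition,
$$T_\alpha(x) = \sum_n c_n\, e_n^*(x)\, e_n,$$
where $c_n = \alpha/|e_n^*(x)| \in (0,1)$ for $n \in \Gamma$ and $c_n = 1$ for $n \notin \Gamma$, so that $T_\alpha$ acts on $x$ by a diagonal multiplier with entries in $(0,1]$. The elementary identity $c_n = \int_0^{c_n} dt$ and an interchange of sum and integral give the layer-cake representation
$$T_\alpha(x) = \int_0^1 P_{A(t)}(x)\,dt, \qquad A(t) := \{n : c_n > t\}.$$
Since the thresholds $\{c_n : n \in \Gamma\}$ form a finite set, the map $t\mapsto A(t)$ takes only finitely many values on $[0,1]$, and this integral collapses to a finite convex combination
$$T_\alpha(x) \;=\; \sum_{j=0}^{|\Gamma|} \mu_j\, P_{A_j}(x), \qquad \mu_j \ge 0, \quad \sum_j \mu_j = 1.$$

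The crux of the argument is that each complement $A_j^c$ coincides with a greedy set $A_{m_j}(x)$ of $x$ for some $m_j \in \{0,1,\ldots,|\Gamma|\}$. Indeed, as $t$ grows through the thresholds $\{c_n\}_{n\in\Gamma}$ in increasing order, elements of $\Gamma$ are removed from $A(t)$ in decreasing order of $|e_n^*(x)|$ (since larger $|e_n^*(x)|$ corresponds to smaller $c_n$). Hence $A_j^c$ always consists of the indices of the largest coefficients of $x$, and consequently $P_{A_j^c}(x) = \mathcal G_{m_j}(x)$ and $P_{A_j}(x) = x - \mathcal G_{m_j}(x)$. The latter expression is a legitimate element of $\X$ even when $\supp(x)$ is infinite, since $A_j^c$ itself is always finite. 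The $C_q$-quasi-greedy assumption now provides $\|P_{A_j}(x)\| \le C_q\|x\|$ for every $j$, and the triangle inequality on the convex combination yields
$$\|T_\alpha(x)\| \;\le\; \sum_j \mu_j\, \|P_{A_j}(x)\| \;\le\; C_q\,\|x\|,$$
which is precisely the desired bound.

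The main subtlety I expect is the treatment of ties among the moduli $|e_n^*(x)|$: if two distinct indices $n,n' \in \Gamma$ share a common modulus, they leave $A(t)$ at the same threshold, and one must verify that the resulting $A_j^c$ still agrees with the natural greedy set $A_{m_j}(x)$ produced by the ordering $\rho$. This can be arranged by ordering the simultaneous departures within each tied group by increasing index (which is exactly how $\rho$ breaks ties); alternatively, a small perturbation of the coefficients reduces to the generic tie-free case and the final inequality passes to the limit by continuity.
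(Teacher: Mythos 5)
Your argument is correct and is essentially the proof of the cited result \cite[Lemma 2.5]{BBG} (the paper itself only quotes it): one writes $T_\alpha(x)$ as a convex combination of the vectors $x-\mathcal G_m(x)$, $0\le m\le |\Gamma_\alpha|$, and applies the quasi-greedy bound to each term; your layer-cake integral is just the continuous form of the Abel-summation identity used there. The tie issue you flag at the end is in fact automatic, since each set $A_j^c=\{n:\vert e_n^*(x)\vert\ge \alpha/t\}$ is a full level set and therefore coincides with the natural greedy set of its own cardinality, all tied indices entering simultaneously.
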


We shall also use the following known inequality from  \cite{DKKT}.
\begin{lem}{\cite[Lemma 2.2]{DKKT}}\label{propC}
If $\mathcal B$ is a $C_q$-quasi-greedy basis in $\mathbb X$, 
\begin{eqnarray}\label{ineqC}
\min_{j\in G}\vert e_j^*(x)\vert \Vert \mathbf{1}_{\varepsilon G}\Vert \leq 2C_q\Vert x\Vert,\; \forall x\in\mathbb X, \forall G\; \text{greedy set of}\; x,
\end{eqnarray}
with $\varepsilon = \lbrace\sgn(e_j^*(x))\rbrace$.
\end{lem}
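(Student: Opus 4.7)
The plan is to express $\alpha\,\mathbf{1}_{\varepsilon G}$ as an explicit difference of two vectors each controlled by $C_q\Vert x\Vert$, namely the truncation $T_\alpha(x)$ and the greedy tail $P_{G^c}(x)$. Setting $\alpha := \min_{j\in G}|e_j^*(x)|$ (which I may assume strictly positive, otherwise the inequality is trivial) and $m := |G|$, the factor $2$ in the target constant $2C_q$ will emerge directly from a single triangle inequality.

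The heart of the argument is the coordinate identity
\[
T_\alpha(x) = \alpha\,\mathbf{1}_{\varepsilon G} + P_{G^c}(x).
\]
To verify it, I would examine three cases index by index. For $j\in G$ with $|e_j^*(x)|>\alpha$, $T_\alpha$ clips the coefficient to $\alpha\,\sgn(e_j^*(x)) = \alpha\varepsilon_j$. For $j\in G$ with $|e_j^*(x)|=\alpha$, the coefficient $e_j^*(x) = \alpha\varepsilon_j$ already sits at the threshold and $T_\alpha$ leaves it untouched, so once more the value is $\alpha\varepsilon_j$. For $j\notin G$, the greedy-set property $\max_{j\notin G}|e_j^*(x)|\leq \alpha$ ensures $T_\alpha(e_j^*(x)) = e_j^*(x)$. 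Summing over the three cases reproduces the identity, with the $G$-part collapsing to $\alpha\,\mathbf{1}_{\varepsilon G}$ and the $G^c$-part to $P_{G^c}(x)$.

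With the identity in hand, the conclusion is immediate. By Proposition \ref{truncation}, $\Vert T_\alpha(x)\Vert \leq C_q\Vert x\Vert$, while the quasi-greedy hypothesis applied with $m=|G|$ gives $\Vert P_{G^c}(x)\Vert = \Vert x - \mathcal G_m(x)\Vert \leq C_q\Vert x\Vert$. The triangle inequality then yields
\[
\alpha\,\Vert \mathbf{1}_{\varepsilon G}\Vert = \Vert T_\alpha(x) - P_{G^c}(x)\Vert \leq \Vert T_\alpha(x)\Vert + \Vert P_{G^c}(x)\Vert \leq 2 C_q\,\Vert x\Vert,
\]
which is exactly the desired bound.

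The only subtle point I anticipate is the handling of coefficients sitting exactly at the threshold $|e_j^*(x)|=\alpha$. In the coordinate identity this causes no trouble because both branches of $T_\alpha$ happen to return the same value $\alpha\varepsilon_j$ at such indices. If the given greedy set $G$ differs from the canonical natural-ordering greedy set $A_m(x)$ only in how these ties at level $\alpha$ are broken, then a brief perturbation argument — replacing $x$ by $x_\delta := x + \delta\sum_{j\in G,\, |e_j^*(x)|=\alpha}\varepsilon_j e_j$, so that $G$ becomes the natural greedy set of $x_\delta$ of cardinality $m$, and letting $\delta\to 0^+$ — reduces the application of the quasi-greedy inequality to the canonical case and the bound passes to the limit by continuity of the basis functionals.
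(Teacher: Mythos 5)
Your proof is correct. Note first that the paper itself offers no proof of this lemma; it is quoted verbatim from \cite[Lemma 2.2]{DKKT}, so there is nothing internal to compare against. Your route is a clean and legitimate one, and it is well matched to the machinery the paper does set up: the coordinate identity $T_\alpha(x)=\alpha\mathbf{1}_{\varepsilon G}+P_{G^c}(x)$ with $\alpha=\min_{j\in G}\vert e_j^*(x)\vert$ holds exactly as you verify it (the threshold cases $\vert e_j^*(x)\vert=\alpha$ inside $G$ give $\alpha\varepsilon_j$ under either branch of $T_\alpha$, and the greedy-set inequality forces $T_\alpha$ to act as the identity off $G$), after which Proposition \ref{truncation} bounds $\Vert T_\alpha(x)\Vert$ by $C_q\Vert x\Vert$ and the quasi-greedy inequality bounds $\Vert P_{G^c}(x)\Vert$ by $C_q\Vert x\Vert$, yielding the constant $2C_q$ by the triangle inequality. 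Your closing perturbation argument is also needed and correctly executed: it is the step $\Vert P_{G^c}(x)\Vert\leq C_q\Vert x\Vert$, not the truncation identity, that requires $G$ to be realized as a canonical greedy set $A_m(x_\delta)$, and since your perturbation is supported on $G$ the quantity $P_{G^c}(x_\delta)=P_{G^c}(x)$ is unchanged while $\Vert x_\delta\Vert\to\Vert x\Vert$. The original argument in \cite{DKKT} predates the explicit truncation operator (introduced in \cite{DKK} and sharpened in \cite{BBG}), so your derivation is arguably the more streamlined modern one; what it buys is that the factor $2$ is seen transparently as one triangle inequality applied to two operators each of norm at most $C_q$.
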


\section{Proof of the main result}
Using the lemmas of Section \ref{lemmas}, we prove Theorem \ref{equiv}.
\begin{proof}[Proof of Theorem \ref{equiv}]
First, we show the proof of $a)$. Suppose that $\mathcal B$ is $C_{q}$-quasi-greedy and $C_{sd}$-super-democratic. To show the semi-greediness, we will follow the same procedure as in the proof of \cite[Theorem 4.1]{DKO} and \cite[Theorem 3.2]{DKK}.
Take $x\in \X$ and $z=\sum_{i\in B}a_i e_i$ with $\vert B\vert=m$ such that $\Vert x-z\Vert <\sigma_m(x)+\delta$, for $\delta>0$. Let $A_m(x)$ the greedy set of $x$ of cardinality $m$. We write $x-z:= \sum_{i=1}^\infty y_i e_i$, where $y_i = e_i^*(x)-a_i$ for $i\in B$ and $y_i=e_i^*(x)$ for $i\not\in B$.
To prove that $\mathcal B$ is semi-greedy we only have to show that there exists $w\in\X$ so that $supp(x-w)\subset A_m(x)$ and $\Vert w\Vert \leq c \Vert x-z\Vert$ for some positive constant $c$. If $\alpha = \max_{j\not \in A_m(x)}\vert e_j^*(x)\vert$, we take the element $w$ as is defined in \cite{DKK}:
$$w:=\sum_{i\in A_m(x)}T_\alpha(y_i)e_i + P_{A_m^c(x)}(x) = \sum_{i=1}^\infty T_\alpha(y_i)e_i + \sum_{i\in B\setminus A_m(x)}(e_i^*(x)-T_\alpha(y_i))e_i.$$
Of course, $w$ satisfies that $\supp(x-w)\subset A_m(x)$ and we will prove that $\Vert w\Vert \leq (C_q + 4C_qC_s)\Vert x-z\Vert$. To obtain this bound, using Proposition \ref{truncation},
\begin{eqnarray}\label{f1}
\Vert \sum_{i=1}^\infty T_\alpha(y_i)e_i\Vert \leq C_q\Vert x-z\Vert.
\end{eqnarray}

Taking into account that $\vert e_i^*(x)-T_\alpha(y_i)\vert \leq 2\alpha$ for $i\in B\setminus A_m(x)$, using Lemma \ref{conv},
\begin{eqnarray}\label{f2}
\left\Vert \sum_{i\in B\setminus A_m(x)}(e_i^*(x)-T_\alpha(y_i))e_i\right\Vert \leq 2\alpha\varphi(\vert B\setminus A_m(x)\vert)\leq 2\min_{j\in A_m(x)\setminus B}\vert e_j^*(x-z)\vert \varphi(\vert A_m(x)\setminus B\vert).
\end{eqnarray}
To improve the bound of $C_s$ as we have commented in the Remark \ref{remark2}, based on (\cite[Lemma 2.1]{GHO}), we can find a greedy set $\Gamma$ of $x-z$ with the following conditions: 
\begin{itemize}
	\item $\vert \Gamma\vert = \vert B\setminus A_m(x)\vert$,
	\item $\min_{j\in A_m(x)\setminus B}\vert e_j^*(x-z)\vert \leq \min_{j\in \Gamma}\vert e_j^*(x-z)\vert$.
\end{itemize}
Hence, using $\varepsilon = \lbrace\sgn(e_j^*(x-z))\rbrace$ and Lemma \ref{propC},
\begin{eqnarray}\label{f3}
\min_{j\in A_m(x)\setminus B}\vert e_j^*(x-z)\vert\varphi(\vert B\setminus A_m(x)\vert) \leq C_{sd}\min_{j\in \Gamma}\vert e_j^*(x-z)\vert\Vert \mathbf{1}_{\varepsilon\Gamma}\Vert\leq 2C_qC_{sd}\Vert x-z\Vert.
\end{eqnarray}

Thus, using \eqref{f1}, \eqref{f2}, \eqref{f3}, the basis is $C_s$-semi-greedy with constant $C_s\leq (C_q+4C_qC_{sd})$.
\newline

Now, we prove $b)$. Assume that $\mathcal B$ is $C_s$-semi-greedy.

Super-democracy can be proved using the technique of \cite[Proposition 3.3]{DKK}. Indeed, take $A$ and $B$ with $\vert A\vert \leq \vert B\vert$ and $\vert \varepsilon\vert=\vert \eta \vert=1$. Select now a set $D$ such that $\vert D\vert = \vert A\vert$, $D>(A\cup B)$ and define $z:= \mathbf{1}_{\varepsilon A}+(1+\delta)\mathbf{1}_D$ with $\delta>0$ . It is clear that $\mathcal G_{\vert D\vert}(z)=(1+\delta)\mathbf{1}_D$. Then,
	$$\Vert z-\mathcal{CG}_{\vert D\vert}(z)\Vert = \left\Vert \mathbf{1}_{\varepsilon A}+\sum_{i\in D}c_i e_i\right\Vert,$$
	where the scalars $(c_i)_{i\in D}$ are given by the Chebyshev approximation. Then,
	$$\Vert \mathbf{1}_{\varepsilon A}\Vert \leq K_b\Vert \mathbf{1}_{\varepsilon A}+\sum_{i\in D}c_i e_i\Vert \leq K_b C_s\sigma_{\vert D\vert}(z)\leq K_bC_s\Vert (1+\delta)\mathbf{1}_D\Vert.$$
	If $\delta$ goes to $0$, 
	\begin{eqnarray}\label{sup1}
	\Vert \mathbf{1}_{\varepsilon A}\Vert \leq C_sK_b\Vert \mathbf{1}_D\Vert. 
	\end{eqnarray}
	The next step is to obtain that $\Vert \mathbf{1}_D\Vert \leq 2K_bC_s\Vert\mathbf{1}_{\eta B}\Vert$. For that, we take the element $y:= (1+\delta)\mathbf{1}_{\eta B}+\mathbf{1}_D$ with $\delta>0$. Then, $\mathcal G_{\vert B\vert}(y)=(1+\delta)\mathbf{1}_{\eta B}$. Hence,
	$$\Vert y-\mathcal{CG}_{\vert B\vert}(y)\Vert=\left\Vert \sum_{i\in B}d_i e_i + \mathbf{1}_D\right\Vert,$$
	where as before, the scalars $(d_i)_{i\in B}$ are given by the Chebyshev approximation. Using again the semi-greediness,
	$$\Vert \mathbf{1}_D\Vert \leq 2K_b\Vert \sum_{i\in B}d_i e_i + \mathbf{1}_D\Vert \leq 2C_sK_b\sigma_{\vert B\vert}(y)\leq 2C_sK_b\Vert (1+\delta)\mathbf{1}_{\eta B}\Vert.$$
	Taking $\delta\rightarrow 0$, we obtain that
	
	\begin{eqnarray}\label{sup2}
	\Vert \mathbf{1}_D\Vert \leq 2C_sK_b\Vert \mathbf{1}_{\eta B}\Vert.	
	\end{eqnarray}
 
Using \eqref{sup1} and \eqref{sup2}, $$\Vert \mathbf{1}_{\varepsilon A}\Vert \leq 2(C_sK_b)^2\Vert \mathbf{1}_{\eta B}\Vert.$$
	Hence, the basis is super-democratic with constant $C_{sd}\leq 2(C_sK_b)^2$.
	
	To prove now the quasi-greediness, we will present a more elemental proof than in \cite[Theorem 3.6]{DKK} that works for general Banach spaces: take an element $x\in\X$ with finite support and $A_m(x)$ the greedy set of $x$ with cardinality $m$, take $D>\supp(x)$ with $\vert D\vert = \vert A_m(x)\vert=m$ and define $z:= x-\mathcal G_m(x)+(\delta+\alpha) \mathbf{1}_D$, where $\delta>0$ and $\alpha = \min_{j\in A_m(x)}\vert e_j^*(x)\vert$. Then, since $A_m(z)=D$, $$\Vert z-\mathcal{CG}_{m}(z)\Vert = \left\Vert x-\mathcal G_m(x)+\sum_{i\in D}f_i e_i\right\Vert,$$
	for some scalars $(f_i)_{i\in D}$ given by the Chebyshev approximation. Then,
	\begin{eqnarray*}
	\Vert x-\mathcal G_m(x)\Vert \leq K_b\left\Vert x-\mathcal G_m(x)+\sum_{i\in D}f_i e_i\right\Vert\leq K_bC_s\sigma_{m}(z)\leq K_bC_s\Vert x+(\delta+\alpha) \mathbf{1}_D\Vert.
	\end{eqnarray*}
	Taking $\delta \rightarrow 0$, 
	\begin{eqnarray}\label{qg1}
	\Vert x-\mathcal G_m(x)\Vert \leq K_bC_s\Vert x+\alpha \mathbf{1}_D\Vert\leq K_bC_s(\Vert x\Vert +\Vert \alpha \mathbf{1}_D\Vert).
	\end{eqnarray}
	
	Select now $y:= \sum_{j\in A_m(x)}(e_j^*(x)+\delta\varepsilon_j)e_j+ \sum_{j\in A_m^c(x)}e_j^*(x)e_j+\alpha \mathbf{1}_D$, with $\delta>0$ and $\varepsilon_j = \sgn(e_j^*(x))$ for $j\in A_m(x)$. Then, since $\mathcal{G}_{m}(y)= \sum_{j\in A_m(x)}(e_j^*(x)+\delta\varepsilon_j)e_j$, using Chebyshev approximation,
	$$\Vert y -\mathcal{CG}_{m}(y)\Vert = \left\Vert \sum_{j\in A_m(x)}a_i e_i + \sum_{j\in A_m^c(x)}e_j^*(x)e_j+\alpha \mathbf{1}_D\right\Vert.$$
	Hence,
	\begin{eqnarray*}
	\Vert \alpha \mathbf{1}_D\Vert &\leq& 2K_b\left\Vert \sum_{j\in A_m(x)}a_i e_i + \sum_{j\in A_m^c(x)}e_j^*(x)e_j +\alpha \mathbf{1}_D\right\Vert\leq 2K_bC_s\sigma_{m}(y)\\
	&\leq& 2K_bC_s\left\Vert \sum_{j\in A_m(x)}(e_j^*(x)+\delta\varepsilon_j)e_j+ \sum_{j\in A_m^c(x)}e_j^*(x)e_j\right\Vert.
	\end{eqnarray*}
	Taking $\delta\rightarrow 0$, $\Vert \alpha \mathbf{1}_D\Vert \leq 2K_bC_s\Vert x\Vert$. Using the last inequality and \eqref{qg1},
	$$\Vert x-\mathcal G_m(x)\Vert \leq K_bC_s(\Vert x\Vert + 2K_bC_s\Vert x\Vert)\leq 3(K_bC_s)^2\Vert x\Vert.$$
	Thus, $\Vert x-\mathcal{G}_m(x)\Vert \leq 3(K_bC_s)^2\Vert x\Vert$ for any finite $x\in\X$ and $m\leq \vert\supp(x)\vert$. 
	
For the general case, we take  $x\in\mathbb X$ and $A_m(x)$ the greedy set of $x$ with cardinality $m$. We can find a number $N\in\mathbb N$ such that $A_m(x)\subset \lbrace 1,...,N\rbrace$. Then, since  $\mathcal G_m(x)=\mathcal G_m(S_N(x))$, applying that $\mathcal B$ is Schauder and quasi-greedy for elements with finite support,
\begin{eqnarray*}
\Vert x-\mathcal G_m(x)\Vert&\leq&  \Vert x-S_N(x)\Vert + \Vert S_N(x)- \mathcal G_m(x)\Vert\\
&=& \Vert x-S_N(x)\Vert + \Vert S_N(x)-\mathcal{G}_m(S_N(x))\Vert\\
&\leq& 2K_b\Vert x\Vert + 3(K_bC_s)^2\Vert S_N(x)\Vert\\
&\leq& K_b(2+3(K_bC_s)^2)\Vert x\Vert.
\end{eqnarray*}

This completes the proof.
\end{proof}

\begin{proof}[Proof of Corollary \ref{charalmo}]: The proof follows using Theorem \ref{equiv}, Theorem \ref{chal} and Remark \ref{remark}.
\end{proof}
	
\begin{rem}
In \cite[Section 6-Question 3]{BDKOW}, the authors ask the following question: if a basis $\mathcal B$ satisfies Property (A) and the inequality \eqref{ineqC}, is $\mathcal B$ semi-greedy? We remind that $\mathcal B$ satisfies Property (A) if there is a positive constant $C_a$ such that
$$\Vert x+\mathbf{1}_{\varepsilon A}\Vert \leq C_{a} \Vert x+\mathbf{1}_{\eta B}\Vert,$$
for any $x\in\X$, $A, B$ such that $\vert A\vert = \vert B\vert<\infty$, $A\cap B = \emptyset$, $(A\cup B)\cap \supp(x)= \emptyset$, $\vert\varepsilon\vert=\vert \eta\vert=1$ and $\max_j \vert e_j^*(x)\vert \leq 1$. 
The answer is not due to the example in \cite[Subsection 5.5]{BBG} of a basis $\mathcal B$ in a Banach space such that $\mathcal B$ satisfies the Property (A) and \eqref{ineqC}, but is not quasi-greedy, hence is not almost-greedy and using Theorem \ref{equiv}, $\mathcal B$ is not semi-greedy.
\end{rem}

\section{Open questions}
As discussed in \cite{Woj} (see also \cite{DKO}), one can define the Thresholding Greedy Algorithm and the Thresholding Chebyshev Greedy Algorithm in the context of Markushevich bases, that is, $\lbrace e_i, e_i^*\rbrace$ is a semi-normalized biorthogonal system, $\mathbb X=\overline{span\lbrace e_i : i\in \mathbb N\rbrace}^{\mathbb X}$ and $\mathbb X^* = \overline{span\lbrace e_i^* : i\in\mathbb N\rbrace}^{w^*}$. 
In section a) of Theorem \ref{equiv}, it is enough to work with Markushevich bases instead of Schauder bases. However, in the item b), seems to be necessarily to use that $\mathcal B$ is Schauder to prove the result. 

\textbf{Question 1}: Is it possible to remove the condition to be Schauder in section b) of Theorem \ref{equiv}?

Another interesting problem is to establish if almost-greediness implies the condition to be Schauder. Of course, if $\mathcal B$ is greedy then $\mathcal B$ is Schauder since greediness implies unconditionality. As far as we know, all of examples of almost-greedy bases in the literature seem to be Schauder bases, but we don't know if almost-greediness implies that $\mathcal B$ is Schauder or not.

\textbf{Question 2}: If $\mathcal B$ is almost-greedy, is it necessarily Schauder?

%
%
\medskip

\textbf{Acknowledgments}: Thanks to Eugenio Hern\'andez, Gustavo Garrig\'os, Fernando Albiac and Jos\'e Luis Ansorena for many interesting discussions during the elaboration of this paper.

	\medskip


\begin{thebibliography}{99}
				\bibitem{BBG} \textsc{P. M. Bern\'a, \'O. Blasco, G. Garrig\'os}, \textit {Lebesgue inequalities for greedy algorithm in general bases}, Rev. Mat. Complut. \textbf{30}, 369-392 (2017).
				\bibitem{BDKOW} \textsc{P. M. Bern\'a, S. J. Dilworth, D. Kutzarova, T. Oikhberg, B. Wallis}, \textit {The weighted Property (A) and the greedy algorithm} (Submitted) https://arxiv.org/abs/1803.05052.
			\bibitem{DKK} \textsc{S. J. Dilworth, N. J. Kalton, D. Kutzarova}, \textit {On the existence of almost greedy bases in Banach spaces}, Studia Math. {\bf 159} (2003), 67--101. 
				\bibitem{DKO} \textsc{S. J. Dilworth, D. Kutzarova, T. Oikhberg}, \textit {Lebesgue constants for the weak greedy algorithm}, Rev. Matem. Compl. {\bf 28}(2), 393-409 (2015).
\bibitem{DKKT}\textsc{S. J. Dilworth, N. J. Kalton, D. Kutzarova, V. N. Temlyakov}, \textit{The thresholding greedy algorithm, greedy bases, and duality}, Constr.Approx. \textbf{19} (2003), no.4, 575-597.
	\bibitem{GHO}\textsc{G. Garrig\'os, E. Hern\'andez, T. Oikhberg}, \textit{Lebesgue-type inequalities for quasi-greedy bases}, Constr. Approx. \textbf{38} (2013), 447-470.
		\bibitem{KT}\textsc{S. V. Konyagin, V. N. Temlyakov}, \textit{A remark on greedy approximation in Banach spaces}, East J. Approx. \textbf{5} (1999), 365-379.
		\bibitem{Woj} \textsc{P. Wojtaszczyk}, \textit{Greedy algorithm for general biorthogonal systems}, J.Approx.Theory \textbf{107} (2000), no.2, 293-314.
	\end{thebibliography}
\end{document}